\documentclass[preprint,12pt]{elsarticle}





\usepackage{amssymb}


\newtheorem{thm}{Theorem}

\newdefinition{dfn}{Definition}
\newdefinition{ex}{Example}
\newproof{proof}{Proof}
\newtheorem{cor}{Corollary}
\newtheorem{lem}{Lemma}
\newtheorem{rem}{Remark}




\begin{document}

\begin{frontmatter}



\title{Selfadjoint
realization  of boundary-value problems with interior singularities}

\author[rvt]{K. Aydemir}
\ead{kadriye.aydemir@gop.edu.tr}
\author[rvt]{O. Sh. Mukhtarov\corref{cor1}}
\ead{omukhtarov@yahoo.com} \cortext[cor1]{Corresponding Author (Tel:
+90 356 252 16 16, Fax: +90 356 252 15 85)}


\address[rvt]{Department of Mathematics, Faculty of Arts and Science, Gaziosmanpa\c{s}a University,\\
 60250 Tokat, Turkey}


\begin{abstract}
The purpose of this paper is to  investigate some spectral
properties of Sturm-Liouville type problems with interior
singularities. Some of the mathematical aspects necessary for
developing  own technique presented. By applying this technique we
construct some special solutions of the homogeneous equation and
present a formula and the existence conditions of Green's function.
Further based on this results and introducing  operator treatment in
adequate Hilbert space  we derive the resolvent  operator and prove
selfadjointness of the considered problem.
\end{abstract}

\begin{keyword}
Boundary-value problems, transmission conditions, Green function,
resolvent operator, singular point.


\end{keyword}

\end{frontmatter}


\section{Introduction}
For inhomogeneous linear systems, the basic Superposition Principle
says that the response to a combination of external forces is the
self-same combination of responses to the individual forces. In a
finite-dimensional system, any forcing function can be decomposed
into a linear combination of unit impulse forces, each applied to a
single component of the system, and so the full solution can be
written as a linear combination of the solutions to the impulse
problems. This simple idea will be adapted to boundary value
problems governed by differential equations, where the response of
the system to a concentrated impulse force is known as the Green's
function. With the Green's function in hand, the solution to the
inhomogeneous system with a general forcing function can be
reconstructed by superimposing the effects of suitably scaled
impulses. In this study we shall investigate some spectral
properties of the Sturm-Liouville differential equation on two
interval
\begin{equation}\label{1}
\mathcal{L}y:=-y^{\prime \prime }(x)+ q(x)y(x)=\lambda y(x), \ \ \ \
x\in [a,c)\cup(c,b]
\end{equation}
on  $[a, c)\cup(c,b]$, with eigenparameter- dependent boundary
conditions at the end points $x=a$ and $x=b$
\begin{equation}\label{2}
 \tau_{1}(y):=\alpha_{10}y(a)+\alpha_{11}y'(a)=0,
\end{equation}
\begin{equation}\label{3}
\tau_{2}(y):=\alpha_{20}y(b)-\alpha_{21}y'(b)+\lambda(\alpha'_{20}y(b)-\alpha'_{21}y'(b))=0
\end{equation}  and the transmission conditions at the singular interior point $x=c$
\begin{equation}\label{4}
\tau_{3}(y):=\beta^{-}_{11}y'(c-)+\beta^{-}_{10}y(c-)+\beta^{+}_{11}y'(c+)+\beta^{+}_{10}y(c+)=0,
\end{equation}
\begin{equation}\label{5}
\tau_{4}(y):=\beta^{-}_{21}y'(c-)+\beta^{-}_{20}y(c-)+\beta^{+}_{21}y'(c+)+\beta^{+}_{20}y(c+)=0,
\end{equation}
where the potential  $q(x)$ is real continuous function in each of
the intervals $[a, c) \ \textrm{and} (c, b]$ and has a finite limits
$q( c\mp0)$, $\lambda$ \ is a complex spectral parameter, \
$\alpha_{ij}, \ \ \beta^{\pm}_{ij}, \ (i=1,2 \
 \textrm{and} \ \ j=0,1), \alpha'_{ij}(i=2 \ \textrm{and} \ j=0,1)$ are real
numbers.

Our problem differs from the usual regular Sturm-Liouville problem
in the sense that the eigenvalue parameter $\lambda$ are contained
in both differential equation and boundary conditions and two
supplementary transmission conditions at one interior point are
added to boundary conditions.  Such problems are connected with
discontinuous material properties, such as heat and mass transfer,
vibrating string problems when the string loaded additionally with
points masses, diffraction problems \cite{ li, tit} and varied
assortment of physical transfer problems. We develop an own
technique for investigation some spectral properties of this
problem. In particular, we construct the Green function's and
adequate Hilbert space for selfadjoint realization of the considered
problem.

\section{\textbf{Some basic solutions and
Green's function}}Denote the determinant of the k-th and j-th
columns of the matrix
$$T= \left[%
\begin{array}{cccc}
  \beta^{-}_{10} & \beta^{-}_{11} & \beta^{+}_{10} & \beta^{+}_{11} \\
  \beta^{-}_{20} & \beta^{-}_{21} & \beta^{+}_{20} & \beta^{+}_{21}
  \\
\end{array} %
 \right] $$
 by $\Delta_{kj}$. For selfadjoint realization in adequate Hilbert space, everywhere in below we shall assume that
  \begin{equation} \label{se}  \Delta_{12}>0 \ \textrm{and} \ \Delta_{34}>0.\end{equation}
 With a view to constructing the Green's  function we shall define  two special solution of the equation
  (\ref{1})  by own technique as follows.
At first consider the next initial-value problem on the left
interval $[a,c)$
\begin{equation}
-y^{\prime \prime }+q(x)y=\lambda y
  \label{(6)}
\end{equation}
\begin{equation}\label{7}
y(a)=\alpha _{11}, \ y^{\prime }(a)=-\alpha _{10}.
\end{equation}
It is known that this problem has an unique solution \
$u=\varphi^{-}(x,\lambda)$ \ which is an entire function of $\lambda
\in \mathbb{C}$ for each fixed $x \in [a,c)$ (see, for example,
\cite{ti}). By applying the similar method of \cite{os1}  we can
prove that  the equation (\ref{1}) on the right interval  $(c,b]$
has an unique solution $u=\varphi^{+}(x,\lambda)$ satisfying the
equalities
\begin{equation}\label{8}
\varphi^{+}(c+,\lambda )
=\frac{1}{\Delta_{12}}(\Delta_{23}\varphi^{-}(c-,\lambda
)+\Delta_{24}\frac{\partial\varphi^{-}(c-,\lambda )}{\partial x})
\end{equation}
\begin{equation}\label{9}
\frac{\partial\varphi^{+}(c+,\lambda )}{\partial x})
=\frac{-1}{\Delta_{12}}(\Delta_{13}\varphi ^{-}(c-,\lambda
)+\Delta_{14}\frac{\partial\varphi^{-}(c-,\lambda )}{\partial x}).
\end{equation}
which also is an entire function of the parameter $\lambda $ for
each fixed $x \in [c,b]$. Consequently, the solution
$u=\varphi(x,\lambda)$ defined by
\begin{eqnarray} \varphi(x,\lambda)=\left
\{\begin{array}{ll}\label{91}
\varphi^{-}(x,\lambda), & x\in \lbrack a,c) \\
\varphi^{+}(x,\lambda), & x\in (c,b\rbrack \\
\end{array}\right.
\end{eqnarray}
 satisfies the equation (\ref{1}) on whole $[a,c) \cup (c,b]$, the
 first boundary condition of (\ref{2}) and both transmission
 conditions(\ref{4}) and (\ref{5}).\\
 By the same technique, we can define the solution
\begin{eqnarray} \psi(x,\lambda)=\left
\{\begin{array}{ll}\label{92}
\psi^{-}(x,\lambda), & x\in \lbrack a,c) \\
\psi^{+}(x,\lambda), & x\in (c,b\rbrack  \\
\end{array}\right.
\end{eqnarray}
so that
\begin{equation}\label{10}
\psi(b,\lambda)=\alpha _{21}+\lambda\alpha' _{21}, \
\frac{\partial\psi(b,\lambda )}{\partial x})=\alpha
_{20}+\lambda\alpha' _{20}
\end{equation}
and
 \begin{equation}\label{11}
\psi^{-}(c-,\lambda )
=\frac{-1}{\Delta_{34}}(\Delta_{14}\psi^{+}(c+,\lambda
)+\Delta_{24}\frac{\partial\psi^{+}(c+,\lambda )}{\partial x}),
\end{equation}
\begin{equation}\label{12}
\frac{\partial\psi^{+}(c-,\lambda )}{\partial x})
=\frac{1}{\Delta_{34}}(\Delta_{13}\psi ^{+}(c+,\lambda
)+\Delta_{23}\frac{\partial\psi^{+}(c+,\lambda )}{\partial x}).
\end{equation}
Consequently, $\psi(x,\lambda)$  satisfies the equation (\ref{1}) on
whole $[a,c) \cup (c,b]$, the second boundary condition  (\ref{3})
and both transmission condition  (\ref{4}) and (\ref{5}). By using
(\ref{8}),(\ref{9}), (\ref{11}) and (\ref{12}) and the well-known
fact that the Wronskians $W[\varphi^{-}(x,\lambda ),\psi
^{-}(x,\lambda )]$ and $W[\varphi ^{+}(x,\lambda ),\psi
^{+}(x,\lambda )]$ are independent of variable $x$ it is easy to
show that $ \Delta_{12}w^{+}(\lambda ) = \Delta_{34} w^{-}(\lambda
).$ We shall introduce  the characteristic function for the problem
$(\ref{1})-(\ref{5})$ as
\begin{eqnarray*}\label{13}
w(\lambda):= \Delta_{34} w^{-}(\lambda) = \Delta_{12} \
w^{+}(\lambda).
\end{eqnarray*}
Similarly to \cite{os1} we can prove that, there are infinitely many
eigenvalues $\lambda_{n}, \ n=1,2,...$   of the BVTP
$(\ref{1})-(\ref{5})$ which are coincide with
the zeros of characteristic function  \ $w(\lambda)$.\\
Now, let us consider the non-homogenous differential equation
\begin{eqnarray}\label{141}
y''+(\lambda-q(x))y=f(x), \
\end{eqnarray}
on $[a,c)\cup(c,b]$ together with the same boundary and transmission
conditions $(\ref{1})-(\ref{5})$, when $w(\lambda)\neq0.$ The
following formula is obtained for the solution $Y=Y_{0}(x,\lambda )$
of the equation $(\ref{141})$ under boundary and transmission
conditions $(\ref{2})$-$(\ref{5})$
\begin{eqnarray}\label{e8}
Y_{0}(x,\lambda)=\left\{\begin{array}{c}
               \frac{\Delta_{34}\psi^{-}(x,\lambda)}{\omega(\lambda)}\int_{a}^{x}\varphi^{-}(y,\lambda)f(y)dy +
\frac{\Delta_{34}\varphi^{-}(x,\lambda)}{\omega(\lambda)}\int_{x}^{c-}\psi^{-}(y,\lambda)f(y)dy
\\ +\frac{\Delta_{12}\varphi^{-}(x,\lambda)}{w(\lambda
)}\int\limits_{c+}^{b}f(y)\psi
^{+}(y,\lambda)dy \ , \ \ \ \ \ \ \ \ for \  x \in [a,c) \\
                \\
               \frac{\Delta_{12}\psi^{+}(x,\lambda)}{\omega(\lambda)}\int_{c+}^{x}\varphi^{+}(y,\lambda)f(y)dy +
\frac{\Delta_{12}\varphi^{+}(x,\lambda)}{\omega(\lambda)}\int_{x}^{b}\psi^{+}(y,\lambda)f(y)dy\\
+\frac{\Delta_{34}\psi^{+}(x,\lambda)}{w(\lambda
)}\int\limits_{a}^{c-}f(y)\varphi ^{-}(y,\lambda)dy \ , \ \ \ \ \ \ \ \ for \ x \in (c,b] \\
             \end{array}\right.
\end{eqnarray}
 From this formula we find that the Green function's of the problem
 $(\ref{1})$-$(\ref{5})$
has the form
\begin{eqnarray}\label{23}
G_{0}(x,y;\lambda)=\left\{\begin{array}{c}
 \frac{\varphi(y,\lambda)\psi(x,\lambda)}{\omega(\lambda)} \ \ \ \ \textrm{for} \  a\leq y\leq x\leq b, \ \  x, y \neq c\\
                \\
  \frac{\varphi(x,\lambda)\psi(y,\lambda)}{\omega(\lambda)} \ \ \ \ \textrm{for} \  a\leq x\leq y\leq b, \ \  x, y\neq c \\
             \end{array}\right.
\end{eqnarray}
and the solution  $(\ref{e8})$  can be rewritten in the terms of
this Green function's as
\begin{eqnarray}\label{ye} Y_{0}(x,\lambda
)=\Delta_{12}\int\limits_{a}^{c-}G_{0}(x,y;\lambda
)f(y)dy+\Delta_{34}\int\limits_{c+}^{b}G_{0}(x,y;\lambda )f(y)dy.
\end{eqnarray}
\section{\textbf{Construction of the   Resolvent operator by means of\\ Green's function the in adequate Hilbert space} }
In this section we define a  linear operator A in suitable Hilbert
space  such a way that the considered problem can be interpreted as
the eigenvalue problem of this operator. For this we assume that
$\Delta_{0}:= \alpha_{21}\alpha'_{20} - \alpha_{20} \alpha'_{21}>0$
and introduce a new  inner product in the Hilbert space
$H=(L_{2}[a,c)\oplus L_{2}(c,b])\oplus \mathbb{C}$   by
$$
<F,G>_{1}:=\Delta_{12} \ \int_{a}^{c-} f(x)\overline{g(x)}dx +
\Delta_{34} \int_{c+}^{b} f(x)\overline{g(x)}dx +
\frac{\Delta_{34}}{\Delta_{0}}f_{1}\overline{g_{1}}
$$
for $F=\left(
  \begin{array}{c}
   f(x),
    f_{1} \\
  \end{array}
\right)$,\quad $G=\left(
  \begin{array}{c}
    g(x),
  g_{1} \\
  \end{array}
\right)\in H.$
\begin{rem}
Note that this  modified inner product is equivalent to  standard
inner product of $(L_{2}[a,c)\oplus L_{2}(c,b])\oplus\mathbb{C}$, so
$H_{1}=(L_{2}[a,c)\oplus L_{2}(c,b] \oplus \mathbb{C}\\,<.,.>_{1} )$
is also Hilbert space.
\end{rem} For convenience denote
$$T_{b}(f):= \alpha_{20}f(b)-\alpha_{21}f'(b),  \ \  T'_{b}(f):= \alpha'_{20}f(b)-\alpha'_{21}f'(b)$$
and define a linear operator
$$A(\mathcal{L}f(x),
T'_{b}(f))=(\mathcal{L} f, -T_{b}(f))$$ with the domain $D(A)$
consisting of all elements $(f(x), f_{1}) \in H_{1},$ such that
$f(x) \ \textrm{and} \  f'(x)$ are absolutely continuous in each
interval [a,c) \ and (c,b],  and has a finite limit $f(c\mp0)
\textrm{and} \ f'_{1}(c\mp0)$,  $\mathcal{L}f$ $\in L_{2}[a,b]$,
$\tau_{1}f=\tau_{3}f=\tau_{4}f=0$ and
$f_{1}=T'_{b}(f).$\\
Consequently the problem $(\ref{1})-(\ref{5})$ can be written in the
operator form as $$AF=\lambda F, \ \ F=(f(x), T'_{b}(f))
 \in D(A)$$
in the Hilbert space $H_{1}$. It is easy to see that, the operator A
is well defined in $H_{1}$.  Let A be defined as above and let
$\lambda$ not be an eigenvalue of this operator. For construction
the resolvent operator $R(\lambda,A):= (\lambda-A)^{-1}$ we shall
solve the operator equation
\begin{eqnarray}\label{y1}
(\lambda-A)Y=F
\end{eqnarray}
for $F\in H_{1}$. This operator equation is equivalent to the
nonhomogeneous differential equation
\begin{eqnarray}\label{14}
y''+(\lambda-q(x))y=f(x), \
\end{eqnarray}
on $[a,c)\cup(c,b]$ subject to nonhomogeneous boundary conditions
and homogeneous transmission conditions
\begin{equation}\label{15}
\tau_{1}(y)=\tau_{3}(y)=\tau_{4}(y)=0, \ \tau_{2}(y)=-f_{1}
\end{equation}
Let $Im\lambda\neq0$. Putting this general solution in (\ref{15})
yields
\begin{eqnarray}\label{20}
d_{11}=\frac{\Delta_{12}}{\omega(\lambda)}\int_{c+}^{b}\psi^{+}(y,\lambda)f(y)dy+\frac{\Delta_{12}f_{1}}{\omega(\lambda)},
\ \ d_{12}=0
\\
d_{21}=\frac{\Delta_{12}f_{1}}{\omega(\lambda)}, \ \
d_{22}=\frac{\Delta_{34}}{\omega(\lambda)}\int_{a}^{c-}\varphi^{-}(y,\lambda)f(y)dy
\end{eqnarray}
Thus the problem (\ref{14})-(\ref{15}) has an unique solution,

\begin{eqnarray}\label{22}
Y(x,\lambda)=\left\{\begin{array}{c}
               \frac{\Delta_{34}\psi^{-}(x,\lambda)}{\omega(\lambda)}\int_{a}^{x}\varphi^{-}(y,\lambda)f(y)dy +
\frac{\Delta_{34}\varphi^{-}(x,\lambda)}{\omega(\lambda)}\int_{x}^{c-}\psi^{-}(y,\lambda)f(y)dy \\ +\frac{\Delta_{12}\varphi^{-}(x,\lambda}{\omega(\lambda)})(\int_{c+}^{b}\psi^{+}(y,\lambda)f(y)dy+f_{1})\ , \ \ \ \ \ \ \ \ for \  x \in [a,c) \\
                \\
               \frac{\Delta_{12}\psi^{+}(x,\lambda)}{\omega(\lambda)}\int_{c+}^{x}\varphi^{+}(y,\lambda)f(y)dy +
\frac{\Delta_{12}\varphi^{+}(x,\lambda)}{\omega(\lambda)}\int_{x}^{b}\psi^{+}(y,\lambda)f(y)dy\\ +\frac{\Delta_{34}\psi^{+}(x,\lambda)}{\omega(\lambda)}\int_{a}^{c-}\varphi^{-}(y,\lambda)f(y)dy +\frac{\Delta_{12}f_{1}\varphi^{+}(x,\lambda)}{\omega(\lambda)}\ , \ \ \ \ \ \ \ \ for \ x \in (c,b] \\
             \end{array}\right.
\end{eqnarray}
Consequently
\begin{eqnarray}\label{2.16}
Y(x,\lambda)=Y_{0}(x,\lambda)+
f_{1}\Delta_{12}\frac{\varphi(x,\lambda)}{\omega(\lambda)}.
\end{eqnarray}
where $G_{0}(x,\lambda)$ and $Y_{0}(x,\lambda)$ is the same with
(\ref{23}) and (\ref{ye}) respectively. From the equalities
(\ref{10}) and (\ref{23})  it follows that
\begin{eqnarray}\label{2.17}
(G_{0}(x,.;\lambda))'_{\beta}=\frac{\varphi(x,\lambda)}{\omega(\lambda)}.
\end{eqnarray}
By using (\ref{ye}), (\ref{22}) and (\ref{2.17}) we deduce that
\begin{eqnarray}\label{2.18}
Y(x,\lambda)&=&\Delta_{34} \int_{a}^{c-} G_{0}(x,y;\lambda)f(y)dy+
\Delta_{12}\int_{c+}^{b} G_{0}(x,y;\lambda)f(y)dy \nonumber\\&+&
f_{1}\Delta_{12}(G_{0}(x,.;\lambda))'_{\beta}
\end{eqnarray}
Consequently,  the solution  $Y(F,\lambda)$ of the operator equation
(\ref{y1}) has the form
\begin{eqnarray}\label{2.19}
Y(F,\lambda)=(Y(x,\lambda), (Y(.,\lambda))'_{\beta})
\end{eqnarray}
From (\ref{2.18}) and (\ref{2.19}) it follows that
\begin{eqnarray}\label{2.20}
Y(F,\lambda)=(<G_{x,\lambda},\overline{F}>_{1},
(<G_{x,\lambda},\overline{F}>_{1})'_{\beta})
\end{eqnarray}
where under Green' s vector $G_{x,\lambda}$ we mean
\begin{eqnarray}\label{2.21}
G_{x,\lambda}:=(G_{0}({x,.;\lambda}),
(G_{0}({x,.;\lambda}))'_{\beta})
\end{eqnarray}
Now, making use (\ref{23}), (\ref{2.18}), (\ref{2.19}), (\ref{2.20})
and (\ref{2.21}) we see that if $\lambda$  not an eigenvalue of
operator A then
\begin{eqnarray}\label{2.22}
Y(F,\lambda)\in D(A) \ \textrm{for} \ \  F \in H_{1},
\end{eqnarray}
\begin{eqnarray}\label{2.23}
Y((\lambda-A)F,\lambda)=F, \ \textrm{for} \in D(A)
\end{eqnarray}
and
\begin{equation}\label{2.24} \| Y(F,\lambda) \| \leq
\ |Im\lambda|^{-1}\| F \| \ \textrm{for} \ F \in H_{1}, \ \ \ Im
\lambda \neq0 .
\end{equation}
Hence, each nonreal $\lambda\in \mathbb{C}$  is a regular point of
an operator A and
\begin{eqnarray}\label{2.25}
R(\lambda,A)F=(<G_{x,\lambda},\overline{F}>_{1},
(<G_{x,\lambda},\overline{F}>_{1})'_{\beta}) \ \textrm{for} \ \  F
\in H_{1}
\end{eqnarray}
Because of (\ref{2.22}) and (\ref{2.25})
\begin{eqnarray}\label{2.26}
(\lambda-A)D(A)=(\overline{\lambda}-A)D(A)=H_{1} \ \textrm{for} \
Im\lambda \neq0.
\end{eqnarray}
\begin{thm}
The Resolvent operator $R(\lambda,A)$ is compact in the Hilbert
space $H_{1}$.
\end{thm}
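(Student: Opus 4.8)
\noindent The plan is to read off from the resolvent formula (\ref{2.25}) a splitting $R(\lambda,A)=\mathcal K+\mathcal S$ into an integral operator $\mathcal K$ whose kernel is the Green function $G_{0}(x,y;\lambda)$ and a finite-rank operator $\mathcal S$, and then to show that $\mathcal K$ is Hilbert--Schmidt. Fix a nonreal $\lambda$; by the discussion preceding the theorem --- in particular the bound (\ref{2.24}) --- such a $\lambda$ is a regular point, so $\omega(\lambda)\neq0$ and $R(\lambda,A)$ is a bounded operator on $H_{1}$. Write $H_{1}=\mathcal H_{0}\oplus\mathbb{C}$ with $\mathcal H_{0}=L_{2}[a,c)\oplus L_{2}(c,b]$, and let $P_{0}$, $P_{1}$ denote the orthogonal projections onto the two summands. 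From (\ref{2.18})--(\ref{2.21}), for $F=(f,f_{1})\in H_{1}$ one has
\[
P_{0}R(\lambda,A)F=(\mathcal KF)(\cdot)+f_{1}\,\Delta_{12}\,\frac{\varphi(\cdot,\lambda)}{\omega(\lambda)},
\]
where
\[
(\mathcal KF)(x):=\Delta_{34}\!\int_{a}^{c-}\!\!G_{0}(x,y;\lambda)f(y)\,dy+\Delta_{12}\!\int_{c+}^{b}\!\!G_{0}(x,y;\lambda)f(y)\,dy,
\]
while $P_{1}R(\lambda,A)F$ is the scalar obtained by applying the fixed functional $(\cdot)'_{\beta}$ of (\ref{2.17}) to this function.

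Now I would argue as follows. The operator $P_{1}R(\lambda,A)$ is bounded with one-dimensional range, hence compact; the operator $F\mapsto f_{1}\Delta_{12}\varphi(\cdot,\lambda)/\omega(\lambda)$ has rank one, hence is compact; and $R(\lambda,A)=\iota_{0}P_{0}R(\lambda,A)+\iota_{1}P_{1}R(\lambda,A)$, where $\iota_{0},\iota_{1}$ are the (bounded) inclusions of $\mathcal H_{0}$ and $\mathbb{C}$ into $H_{1}$. Consequently the compactness of $R(\lambda,A)$ reduces to the compactness of the integral operator $\mathcal K$ on $\mathcal H_{0}$.

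For the latter it suffices to check that $\mathcal K$ is Hilbert--Schmidt, i.e. that $\int_{a}^{b}\!\int_{a}^{b}|G_{0}(x,y;\lambda)|^{2}\,dy\,dx<\infty$. By (\ref{23}) the kernel equals, on each of the two triangles $\{y\le x\}$ and $\{x\le y\}$, a product of one branch of $\varphi(\cdot,\lambda)$ with one branch of $\psi(\cdot,\lambda)$, divided by the nonzero constant $\omega(\lambda)$. Since $\varphi^{\mp}(\cdot,\lambda)$ and $\psi^{\mp}(\cdot,\lambda)$ solve (\ref{1}) on their subintervals, are continuous there, and possess finite one-sided limits at the interior point $c$, each of them is bounded on $[a,c)$ (respectively on $(c,b]$); hence $G_{0}(\cdot,\cdot;\lambda)$ is a bounded measurable function on the square (with at worst jump discontinuities across the lines $x=c$, $y=c$ and along the diagonal), and the double integral above is finite. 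Therefore $\mathcal K$ is Hilbert--Schmidt and so compact, which gives the compactness of $R(\lambda,A)$. Finally, by Remark~1 the inner product $\langle\cdot,\cdot\rangle_{1}$ is equivalent to the standard inner product of $(L_{2}[a,c)\oplus L_{2}(c,b])\oplus\mathbb{C}$, so compactness with respect to $\langle\cdot,\cdot\rangle_{1}$ coincides with compactness in the usual norm and no extra care is needed there.

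The only genuinely technical point is the boundedness of $G_{0}(\cdot,\cdot;\lambda)$ near the interior singularity $x=c$: one has to confirm that the hypothesis that $q$ has finite limits $q(c\mp0)$, together with the construction of $\varphi^{\pm}$ and $\psi^{\pm}$, really keeps the Green kernel bounded near the lines $x=c$ and $y=c$, rather than merely measurable. Once that is established, the Hilbert--Schmidt estimate --- and with it the compactness of the resolvent --- is immediate, and the rest is bookkeeping with formulas already derived.
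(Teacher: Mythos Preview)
Your argument is correct, and in fact the paper leaves the proof environment for this theorem empty, so there is nothing to compare against: your write-up supplies precisely what the authors omit. The decomposition into the integral operator $\mathcal K$ with kernel $G_{0}$ and finite-rank remainders is the standard route, and your key observation --- that the hypothesis $q(c\mp0)$ finite forces $\varphi^{\mp},\psi^{\mp}$ to extend continuously to the closed subintervals $[a,c]$ and $[c,b]$, hence to be bounded --- is exactly what makes $G_{0}$ bounded on the square and $\mathcal K$ Hilbert--Schmidt. One incidental remark: the coefficients $\Delta_{12},\Delta_{34}$ appear swapped between (\ref{ye}) and (\ref{2.18}) in the paper, which is presumably a typo there; since your compactness argument uses only the boundedness of $G_{0}$ and the equivalence of norms from Remark~1, this inconsistency does not affect your proof.
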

\begin{proof}
\end{proof}
\section{Selfadjoint realization of
the problem }
 At first we shall prove the following lemmas.
\begin{lem}\label{lem3.1}
The domain  $D(A)$ is dense in $H_{1}$.
\end{lem}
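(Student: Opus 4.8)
The plan is to show that the orthogonal complement of $D(A)$ in $H_1$ is trivial. So suppose $F=(f(x),f_1)\in H_1$ satisfies $\langle G,F\rangle_1=0$ for every $G=(g(x),g_1)\in D(A)$. The strategy is to test against a rich enough supply of elements of $D(A)$, first to kill $f$ on the interior of the intervals, and then to deal with the finitely many boundary and transmission ``degrees of freedom'' carried by the point mass $\mathbb{C}$-component and the endpoint $x=b$.

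First I would use the smooth compactly supported functions. Let $C_0^\infty$ denote the space of $g\in L_2[a,c)\oplus L_2(c,b]$ which are $C^\infty$ and vanish together with all derivatives in a neighbourhood of each of the points $a$, $c-$, $c+$, $b$. For such $g$, the element $G=(g(x),0)$ lies in $D(A)$: indeed $\tau_1 g=\tau_3 g=\tau_4 g=0$ because $g$ and $g'$ vanish near $a$ and near $c$, $T_b'(g)=0$ because $g,g'$ vanish near $b$, so the side condition $g_1=T_b'(g)$ reads $0=0$, and $\mathcal{L}g\in L_2$ trivially. Plugging $G=(g,0)$ into $\langle G,F\rangle_1=0$ gives
\begin{equation*}
\Delta_{12}\int_a^{c-} g(x)\overline{f(x)}\,dx+\Delta_{34}\int_{c+}^{b} g(x)\overline{f(x)}\,dx=0
\end{equation*}
for all such $g$. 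Since $\Delta_{12},\Delta_{34}>0$ and $C_0^\infty$ is dense in $L_2[a,c)\oplus L_2(c,b]$, this forces $f(x)=0$ a.e.\ on $[a,c)\cup(c,b]$.

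It remains to show $f_1=0$. For this I would exhibit a single element $G=(g(x),g_1)\in D(A)$ with $g_1\neq 0$; then $\langle G,F\rangle_1=0$ together with $f\equiv 0$ reduces to $\frac{\Delta_{34}}{\Delta_0}g_1\overline{f_1}=0$, and since $\Delta_{34}/\Delta_0>0$ and $g_1\neq0$ we conclude $f_1=0$. To build such a $g$: choose any $C^\infty$ function $g$ on $[a,b]$ (restricted to the two subintervals, with one-sided limits at $c$) which is identically $0$ in a neighbourhood of $a$ and of $c$ — so that $\tau_1 g=\tau_3 g=\tau_4 g=0$ automatically — and which near $x=b$ is arranged so that $T_b'(g)=\alpha_{20}'g(b)-\alpha_{21}'g'(b)\neq 0$; this is possible since $(\alpha_{20}',\alpha_{21}')\neq(0,0)$ (otherwise the boundary condition \eqref{3} would be degenerate, and in any case $\Delta_0>0$ forces not both to vanish). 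Setting $g_1:=T_b'(g)\neq0$ makes $(g,g_1)\in D(A)$, as required. Hence $F=0$, so $D(A)^\perp=\{0\}$ and $D(A)$ is dense in $H_1$.

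The only mildly delicate point is the verification that the trial elements genuinely lie in $D(A)$ — i.e.\ matching up all of $\tau_1,\tau_3,\tau_4$ and the compatibility relation $f_1=T_b'(f)$ simultaneously — but this is exactly why I separate the construction into a piece supported away from $a$ and $c$ (killing the three linear side conditions for free) and a piece near $b$ (chosen to control $T_b'$). I do not expect any real obstacle; the argument is the standard ``fundamental lemma of the calculus of variations plus a finite-dimensional supplement'' tailored to the transmission setting, and the positivity assumptions \eqref{se} and $\Delta_0>0$ are precisely what make the three scalar multipliers in $\langle\cdot,\cdot\rangle_1$ nonzero so that density of the test functions transfers to the weighted inner product.
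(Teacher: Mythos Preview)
Your argument is correct and is the standard density proof for Sturm--Liouville operators with eigenparameter in the boundary condition: kill the function component by testing against $C_0^\infty$ elements (which automatically satisfy $\tau_1,\tau_3,\tau_4$ and have $T'_b=0$), then kill the scalar component by exhibiting one domain element with $T'_b(g)\neq 0$. The positivity hypotheses $\Delta_{12},\Delta_{34},\Delta_0>0$ are used exactly where you say.

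There is nothing to compare against: the paper's own proof environment for this lemma is left empty, so no proof is supplied there. Your write-up would in fact serve as the missing argument.
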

\begin{proof}

\end{proof}
\begin{lem}\label{2.1}
The linear operator $A$ is symmetric in the Hilbert space $H_{1}$.
\end{lem}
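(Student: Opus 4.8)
The plan is to verify the symmetry identity $<AF,G>_{1}=<F,AG>_{1}$ directly for arbitrary $F=(f(x),f_{1})$, $G=(g(x),g_{1})$ in $D(A)$, where necessarily $f_{1}=T'_{b}(f)$, $g_{1}=T'_{b}(g)$ and $AF=(\mathcal{L}f,-T_{b}(f))$, $AG=(\mathcal{L}g,-T_{b}(g))$. Writing out the modified inner product and subtracting, and using that $q$ is real so $\mathcal{L}$ is formally symmetric, one is led to
\[ <AF,G>_{1}-<F,AG>_{1}=\Delta_{12}\int_{a}^{c-}\bigl((\mathcal{L}f)\overline{g}-f\overline{\mathcal{L}g}\bigr)dx+\Delta_{34}\int_{c+}^{b}\bigl((\mathcal{L}f)\overline{g}-f\overline{\mathcal{L}g}\bigr)dx-\frac{\Delta_{34}}{\Delta_{0}}\bigl(T_{b}(f)\overline{T'_{b}(g)}-T'_{b}(f)\overline{T_{b}(g)}\bigr). \]
I would then apply Lagrange's identity $(\mathcal{L}f)\overline{g}-f\overline{\mathcal{L}g}=-\bigl(f'\overline{g}-f\overline{g}'\bigr)'$ on each of $[a,c)$ and $(c,b]$, turning the two integrals into boundary values at $x=a$, $x=c-$, $x=c+$, $x=b$, and the proof reduces to showing these boundary terms cancel in three groups.

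At $x=a$: since $\tau_{1}(f)=\tau_{1}(g)=0$, both $(f(a),f'(a))$ and $(g(a),g'(a))$ are proportional to $(\alpha_{11},-\alpha_{10})$, so the bracket $f'(a)\overline{g(a)}-f(a)\overline{g'(a)}$ vanishes and the $a$-contribution drops. At $x=b$: the Lagrange term carries the weight $\Delta_{34}$ of the right interval and equals $\Delta_{34}\bigl(f'(b)\overline{g(b)}-f(b)\overline{g'(b)}\bigr)$, while a short determinant computation with the real coefficients $\alpha_{2j},\alpha'_{2j}$ gives $T_{b}(f)\overline{T'_{b}(g)}-T'_{b}(f)\overline{T_{b}(g)}=\Delta_{0}\bigl(f(b)\overline{g'(b)}-f'(b)\overline{g(b)}\bigr)$, so the last term above is exactly $-\Delta_{34}\bigl(f(b)\overline{g'(b)}-f'(b)\overline{g(b)}\bigr)$ and cancels the $b$-contribution — which is precisely why the weight $\Delta_{34}/\Delta_{0}$ was put on the $\mathbb{C}$-summand of $<\cdot,\cdot>_{1}$.

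The remaining terms are those at the singular point, $-\Delta_{12}W^{-}(f,g)+\Delta_{34}W^{+}(f,g)$ with $W^{\mp}(f,g):=f'(c\mp)\overline{g(c\mp)}-f(c\mp)\overline{g'(c\mp)}$. To see they cancel I would write $\tau_{3}(\cdot)=\tau_{4}(\cdot)=0$ in the form $B^{-}v^{-}+B^{+}v^{+}=0$, where $v^{\mp}=\bigl(y(c\mp),y'(c\mp)\bigr)^{T}$ and $B^{-}$, $B^{+}$ are the $2\times2$ coefficient matrices formed from the $\beta^{-}_{ij}$ and $\beta^{+}_{ij}$; by the definition of $\Delta_{kj}$ one has $\det B^{-}=\Delta_{12}$ and $\det B^{+}=\Delta_{34}$. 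Since $\Delta_{12}>0$ by $(\ref{se})$, $B^{-}$ is invertible, so $v^{-}=-(B^{-})^{-1}B^{+}v^{+}$ for both $f$ and $g$; the matrices being real and the identity $[Mu,Mw]=(\det M)[u,w]$ (with $[u,w]=u_{1}w_{2}-u_{2}w_{1}$) holding for $2\times2$ matrices, applying it with $M=-(B^{-})^{-1}B^{+}$, of determinant $\Delta_{34}/\Delta_{12}$, yields $\Delta_{12}W^{-}(f,g)=\Delta_{34}W^{+}(f,g)$, so the transmission terms vanish. Collecting the three cancellations gives $<AF,G>_{1}=<F,AG>_{1}$ for all $F,G\in D(A)$; together with Lemma~\ref{lem3.1} this makes $A$ a densely defined symmetric operator.

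I expect the only genuinely delicate point to be the weight bookkeeping: one must check that each Lagrange boundary term is attached to exactly the weight ($\Delta_{12}$ or $\Delta_{34}$) of the subinterval producing it, that this matches the $\Delta_{0}$-normalized correction term at $b$, and that the transmission identity $\Delta_{12}W^{-}=\Delta_{34}W^{+}$ holds with precisely these constants. The positivity of $\Delta_{12},\Delta_{34},\Delta_{0}$ is what makes $<\cdot,\cdot>_{1}$ an inner product and $H_{1}$ a Hilbert space (Remark~1) and will be needed later for the resolvent estimate $(\ref{2.24})$ and for self-adjointness, but in the symmetry computation itself it enters only through non-vanishing; everything else is the routine two-interval integration by parts.
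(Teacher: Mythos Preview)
Your proof is correct and follows essentially the same route as the paper's: integrate by parts on each subinterval, kill the $x=a$ term via $\tau_{1}=0$, cancel the $x=b$ Wronskian against the $\mathbb{C}$-component correction using $T_{b}(f)\overline{T'_{b}(g)}-T'_{b}(f)\overline{T_{b}(g)}=\Delta_{0}\bigl(f(b)\overline{g'(b)}-f'(b)\overline{g(b)}\bigr)$, and use the transmission conditions to get $\Delta_{12}W^{-}=\Delta_{34}W^{+}$. The paper states these last two identities as ``direct calculation''; your matrix argument $v^{-}=-(B^{-})^{-1}B^{+}v^{+}$ together with $[Mu,Mw]=(\det M)[u,w]$ is a clean way to obtain the transmission identity and makes transparent why the weights $\Delta_{12},\Delta_{34}$ are the right ones, but the overall structure is the same.
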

\begin{proof}
Let $F=(f(x), T'_{b}(f)),G=(G_{1}(x), T'_{b}(f)) \in D(A)$. By
partial integration we get
\begin{eqnarray}\label{2.2}
<AF,G>_{1}&=& \Delta_{12} \ \int_{a}^{c-} (\mathcal{L}
f)(x)\overline{g(x)}dx +\Delta_{34} \int_{c+}^{b} (\mathcal{L}
f)(x)\overline{g(x)}dx \nonumber\\&+&
\frac{\Delta_{34}}{\Delta_{0}}T_{b}(f)\overline{T'_{b}(g)}
 \nonumber\\&=&<F,AG>_{1}+ \Delta_{12} \ W(f, \overline{g};c-0) -
\Delta_{12} \ W(f, \overline{g};a)  \nonumber\\
&+&\Delta_{34} \ W(f,\overline{g};b) -
\Delta_{34} \ W(f,\overline{g};c+0) \nonumber\\
&+&
\frac{\Delta_{34}}{\Delta_{0}}(T'_{b}(f)\overline{T_{b}(g)}-T_{b}(f)\overline{T'_{b}(g)}).
\end{eqnarray} From the definition of domain D(A) wee see easily
that $W(f, \overline{g};a)=0$. The direct calculation gives
$$
T'_{b}(f)\overline{T_{b}(g)}-T_{b}(f)\overline{T'_{b}(g)})=-\Delta_{0}W(f,\overline{g};b)
\  \textrm{and} \ W(f,\overline{g};c-0) =\frac{
\Delta_{34}}{\Delta_{12}} \ W(f, \overline{g};c+0).
$$
Substituting these equalities in (\ref{2.2}) we have
\[<AF,G> _{1}\ = \ <F,AG>_{1} \  \textrm{for \ every} \  F,G \in D(A), \]so the operator A is
symmetric in $H$. The proof is complete.
\end{proof}
\begin{rem}\label{rem2}By Lemma \ref{2.1} all eigenvalues of the problem $(\ref{1})-(\ref{5})$ are
real. Therefore it is enough to investigate only real-valued
eigenfunctions.  Taking in view this fact, we can assume that the
eigenfunctions are real-valued.
\end{rem}
\begin{cor}\label{cor2}If $\lambda_{n}$ \textrm{and} $\lambda_{m}$ are distinct eigenvalues
of the problem $(\ref{1})-(\ref{5})$, then the corresponding
eigenfunctions $u_{n}(x)$ \ and \ $u_{m}(x)$ \ is orthogonal in the
sense of the following equality
\begin{eqnarray}\label{2.3}
\Delta_{12} \ \int_{a}^{c-} u(x)v(x)dx + \Delta_{34} \int_{c+}^{b}
u(x)v(x)dx + \frac{\Delta_{34}}{\Delta_{0} }T'_{b}(u)T'_{b}(v)=0.
\end{eqnarray}
\end{cor}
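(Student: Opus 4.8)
The plan is to deduce this orthogonality relation directly from the symmetry of $A$ established in Lemma \ref{2.1}, exactly as in the classical Sturm--Liouville theory, but now with respect to the modified inner product $<\cdot,\cdot>_{1}$ of the Hilbert space $H_{1}$.

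First I would pass from the eigenfunctions $u_{n},u_{m}$ of the BVTP $(\ref{1})$--$(\ref{5})$ to the associated elements of $H_{1}$, namely $U_{n}:=(u_{n}(x),T'_{b}(u_{n}))$ and $U_{m}:=(u_{m}(x),T'_{b}(u_{m}))$. Here one has to check that these really lie in $D(A)$: since $u_{n}$ solves $(\ref{1})$ with finite one-sided limits at $c$ and satisfies $\tau_{1}u_{n}=\tau_{3}u_{n}=\tau_{4}u_{n}=0$, the only point to verify is that the boundary condition $\tau_{2}(u_{n})=0$ translates, once the second component of the vector is chosen to be $T'_{b}(u_{n})$, into the operator identity $AU_{n}=\lambda_{n}U_{n}$. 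This is immediate from the definition $A(\mathcal{L}f,T'_{b}(f))=(\mathcal{L}f,-T_{b}(f))$ together with $\tau_{2}(u_{n})=T_{b}(u_{n})+\lambda_{n}T'_{b}(u_{n})=0$, which gives $-T_{b}(u_{n})=\lambda_{n}T'_{b}(u_{n})$. Hence $AU_{n}=\lambda_{n}U_{n}$ and, in the same way, $AU_{m}=\lambda_{m}U_{m}$ in $H_{1}$.

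Next, applying Lemma \ref{2.1} to the pair $U_{n},U_{m}\in D(A)$ gives $<AU_{n},U_{m}>_{1}=<U_{n},AU_{m}>_{1}$, that is
\[
\lambda_{n}<U_{n},U_{m}>_{1}=\overline{\lambda_{m}}\,<U_{n},U_{m}>_{1}.
\]
By Remark \ref{rem2} all eigenvalues are real, so $\overline{\lambda_{m}}=\lambda_{m}$ and therefore $(\lambda_{n}-\lambda_{m})<U_{n},U_{m}>_{1}=0$. Since $\lambda_{n}\neq\lambda_{m}$, we conclude $<U_{n},U_{m}>_{1}=0$. Writing this out with the explicit formula for $<\cdot,\cdot>_{1}$, and using Remark \ref{rem2} once more to take $u_{n},u_{m}$ real-valued so that the complex conjugates disappear, yields precisely $(\ref{2.3})$ with $u=u_{n}$ and $v=u_{m}$.

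There is no genuine obstacle here; the argument is routine once Lemma \ref{2.1} and the reality of the spectrum are available. The only step that requires a little care is the bookkeeping in the first paragraph --- confirming that the ``boundary'' component $f_{1}=T'_{b}(f)$ of an element of $D(A)$ is exactly what converts the eigenparameter-dependent boundary condition $(\ref{3})$ into a bona fide eigenvalue equation $AF=\lambda F$, so that the symmetry of $A$ may legitimately be invoked for the eigenfunctions of the original problem $(\ref{1})$--$(\ref{5})$.
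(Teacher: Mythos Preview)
Your argument is correct and follows essentially the same route as the paper: the paper's proof simply notes that the eigenelements $(u_{n}(x),T'_{b}(u_{n}))$ and $(u_{m}(x),T'_{b}(u_{m}))$ of the symmetric operator $A$ are orthogonal in $H_{1}$, which is exactly what you spell out in more detail. Your additional verification that $AU_{n}=\lambda_{n}U_{n}$ (i.e., that condition $(\ref{3})$ becomes the eigenvalue equation) is a welcome clarification the paper leaves implicit.
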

\begin{proof}
The proof is immediate from the fact that, the eigenelements\\
$(u(x), T'_{b}(u)) \ \textrm{and} \ (v(x), T'_{b}(v))$ of the
symmetric linear operator $A$ is orthogonal  in the Hilbert space
$H_{1}.$
\end{proof}
\begin{thm}\label{t5f}
The operator A is self-adjoint in $H_{1}$.
\end{thm}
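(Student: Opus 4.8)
The plan is to read off self-adjointness from the three facts already in hand: $A$ is symmetric (Lemma~\ref{2.1}), $D(A)$ is dense in $H_{1}$ (Lemma~\ref{lem3.1}), and $(\lambda-A)D(A)=(\overline{\lambda}-A)D(A)=H_{1}$ for every $\lambda$ with $Im\,\lambda\neq0$ (formula~(\ref{2.26}), itself a consequence of the explicit bounded, everywhere-defined resolvent~(\ref{2.24})--(\ref{2.25})). These are precisely the hypotheses of the basic criterion for self-adjointness of a symmetric operator, so the remaining argument is short and no substantial new computation is needed.

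In detail, since $A$ is symmetric and densely defined, $A^{*}$ exists and $A\subseteq A^{*}$, so it suffices to establish the reverse inclusion $D(A^{*})\subseteq D(A)$. Fix $\lambda$ with $Im\,\lambda\neq0$ and let $F\in D(A^{*})$. By~(\ref{2.26}) we have $(\lambda-A)D(A)=H_{1}$, hence there exists $G\in D(A)$ with $(\lambda-A)G=(\lambda-A^{*})F$; since $A\subseteq A^{*}$ the left side equals $(\lambda-A^{*})G$, and therefore $(\lambda-A^{*})(F-G)=0$, i.e. $F-G\in\ker(\lambda-A^{*})$. For any densely defined operator one has the identity $\ker(\lambda-A^{*})=\bigl(\mathrm{Ran}(\overline{\lambda}-A)\bigr)^{\perp}$, the orthogonal complement being taken in $H_{1}$ with respect to $\langle\cdot,\cdot\rangle_{1}$; applying~(\ref{2.26}) once more gives $\mathrm{Ran}(\overline{\lambda}-A)=H_{1}$, so this complement is $\{0\}$. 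Thus $F=G\in D(A)$, which yields $A^{*}\subseteq A$ and hence $A=A^{*}$. Equivalently, taking $\lambda=i$ in~(\ref{2.26}) shows $\mathrm{Ran}(A-i)=\mathrm{Ran}(A+i)=H_{1}$, so the deficiency numbers $\dim\ker(A^{*}\mp i)$ both vanish and von~Neumann's theorem gives self-adjointness of $A$.

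I do not expect a real obstacle here, because all the analytic work has been done in the construction of the resolvent and in Lemmas~\ref{lem3.1}--\ref{2.1}; the only points needing care are bookkeeping ones. First, every inner product, every adjoint, and every orthogonal complement must be computed with the modified inner product $\langle\cdot,\cdot\rangle_{1}$ of $H_{1}$ (not the standard one on $L_{2}[a,c)\oplus L_{2}(c,b]\oplus\mathbb{C}$), which is legitimate because the two inner products are equivalent. Second, the identity $\ker(\lambda-A^{*})=\mathrm{Ran}(\overline{\lambda}-A)^{\perp}$ must be invoked with the correct placement of $\lambda$ versus $\overline{\lambda}$, which depends on the conjugate-linearity convention built into $\langle\cdot,\cdot\rangle_{1}$; once these are fixed the proof closes exactly as above.
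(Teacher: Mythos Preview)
Your proposal is correct and is exactly the argument the paper is set up to give: the paper's own proof environment for this theorem is left empty, but all the ingredients you invoke---density (Lemma~\ref{lem3.1}), symmetry (Lemma~\ref{2.1}), and the range identity~(\ref{2.26})---are precisely what the preceding section assembles, so your short closure via the standard criterion (or equivalently via vanishing deficiency indices) is the intended completion.
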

\begin{proof}
\end{proof}

\end{document}